\documentclass[12pt]{amsart}
\usepackage{amssymb,amsmath}

\parskip=\smallskipamount

\newtheorem{theorem}{Theorem}
\newtheorem{lemma}[theorem]{Lemma}

\theoremstyle{definition}
\newtheorem{remark}[theorem]{Remark}

\newcommand{\C}{\mathbb{C}}

\newcommand{\Z}{\mathbb{Z}}

\newcommand{\R}{\mathbb{R}}

\newcommand{\PD}{\operatorname{PD}}
                                 
\newcommand{\Grr}{{\mathrm{Gr}}_\R}
\newcommand{\Grtr}{{\mathrm{Gr}_{tr}}}

\newcommand{\Gl}{{\mathrm{GL}}}
\newcommand{\Sl}{{\mathrm{SL}}}
\newcommand{\Su}{{\mathrm{SU}}}
\newcommand{\U}{{\mathrm{U}}}
\newcommand{\So}{{\mathrm{SO}}}

\newcommand{\cC}{\mathcal{C}}

\newcommand{\cM}{\mathcal{M}}

\newcommand{\cT}{\mathcal{T}}

\newcommand\hra{\hookrightarrow}

\newcommand\Mon{\mathrm{Mon}}

\numberwithin{equation}{section}

\makeatletter
\@namedef{subjclassname@2020}{\textup{2020} Mathematics Subject Classification}
\makeatother

\begin{document}
\title[Cancelling CR singularities of $3$-manifolds in complex threefolds]
{Cancelling CR singularities of 3-manifolds in complex threefolds}
\author[Marko Slapar]{Marko Slapar}
\address{Marko Slapar, Faculty of Mathematics and Physics\\ University of Ljubljana \\ Jadranska 19\\1000 Ljubljana, Slovenia\\ \newline
Faculty of Education\\ University of Ljubljana \\ Kardeljeva plo\v{s}\v{c}ad 16\\1000 Ljubljana, Slovenia\\
and\newline
  Institute of Mathematics, Physics and Mechanics\\Jadranska
  19\\1000 Ljubljana, Slovenia}
\email{marko.slapar@fmf.uni-lj.si}

\author[Sa\v{s}o Strle]{Sa\v{s}o Strle}
\address{Sa\v{s}o Strle, Faculty of Mathematics and Physics \\ University of Ljubljana \\ Jadranska 21 \\ 1000 Ljubljana, Slovenia }
\email{saso.strle@fmf.uni-lj.si}
\keywords{3-manifolds, CR singular points, cancellation theorem}
\date{\today}

%
%
\thanks{The first author was supported by the European Union through the ERC Advanced Grant HPDR, grant agreement No. 101053085, 
awarded to Franc Forstneri\v{c}, and by the research program P1-0291 and research project J1-70033 of ARIS, Republic of Slovenia.
The second author was supported by the research program P1-0288 of ARIS, Republic of Slovenia.}
\subjclass[2020] {Primary 32V40; Secondary 57R45, 57R40}

\begin{abstract}
Let $M$ be a closed oriented $3$-manifold generically embedded in a complex $3$-manifold $X$. Its CR singular set is an 
oriented link $L\subset M$. We prove that if a sublink $L'\subset L$ bounds an oriented Seifert surface $S\subset M$ 
in the complement of $L\setminus L'$, then the CR singularities along $L'$ can be cancelled by an arbitrarily 
$\cC^0$-small isotopy supported in an arbitrarily small neighbourhood of $S$.\end{abstract}
\maketitle

\section{Introduction} 
Let $M$ be a closed oriented $n$-manifold embedded in an $n$-dimensional complex manifold $X$. The complex tangent space 
at a point $p\in M$ is defined as $T_p^\C M=T_pM\cap JT_pM$, where we consider $TM$ as a real subbundle of $TX$  
and $J$ is the complex structure on $X$. The embedding is totally real at $p$ if $T_p^\C M=\{0\}$, whereas
$p$ is called a CR singular point if $\dim_\C T_p^\C M>0$.  If the embedding is sufficiently generic, then CR singular 
points of $M$ form a submanifold $L$ of codimension $2$ in $M$, and $\dim_\C T_p^\C M=1$ for $p\in L$ \cite{Lai}. 
CR singular points can be further classified as elliptic, hyperbolic or parabolic. At least in the case where 
$\dim M\le 5$, and under additional genericity assumptions on the embedding, certain topological counts can be defined, 
connecting the Euler class of $M$ and the normal bundle of $M$ with the Euler characteristics of elliptic and hyperbolic 
points and the so-called parabolic index at the submanifold of parabolic points \cite{W}.

The theory of CR singular points is well understood in dimension $n=2$, where the CR singular set is zero-dimensional 
and parabolic points are absent for generic embeddings. If $M$ is an oriented surface in a complex surface $X$, elliptic 
and hyperbolic points can also be assigned an orientation sign, depending on whether the orientation of the tangent space 
of $M$ agrees with the induced orientation coming from $X$. In this situation, a pair consisting of an elliptic point and 
a hyperbolic point with the same orientation sign can be cancelled by a $\cC^0$-small isotopy, supported in a small tubular 
neighbourhood of an arc whose endpoints are the two points and whose interior lies in the totally real part \cite{EH,F1}. 

Every oriented closed $3$-manifold is parallelizable and in the case of $X=\C^3$, every embedding of a closed oriented 
$3$-manifold into $\mathbb C^3$ is isotopic to a totally real embedding \cite{Gr,F0}. On the other hand, Kasuya and Takase 
\cite{KT} have shown that any null-homologous link $L\subset M$ can be realised precisely 
as the link of CR singular points for an appropriate embedding of $M$ into $\C^3$. Cancellation of isolated CR singular 
points was treated in \cite{E1}, and a version of the cancellation theorem for embeddings of $S^3$ into $\C^3$ was proved 
in \cite{E2}.

From now on, let $X$ be a complex $3$-manifold with complex structure $J$. We call an embedding of an oriented $3$-manifold $M$ into $X$ \emph{generic} if the Gauss map of the embedding is transverse to
\[\Grr^+(3,TX)\setminus \Grtr(3,TX),\]
where $\Grr^+(3,TX)\to X$  denotes the Grassmannian bundle of oriented real $3$-planes in $TX$, and $\Grtr(3,TX)\subset \Grr^+(3,TX)$ is the open subbundle consisting of those oriented real $3$-planes $\Pi\subset T_xX$ satisfying $
\Pi\cap J\Pi=\{0\}.$ The orientation of $L$ is then the one induced by viewing $L$ as the transverse zero set of the determinant section
\[df(e_1)\wedge_{\C}df(e_2)\wedge_{\C}df(e_3)
\in \Gamma\left(M,\Lambda_{\C}^3 f^*TX\right),\]
where $(e_1,e_2,e_3)$ is a positive frame of $TM$. This orientation is independent of the chosen local positive frame.
In higher dimensions, an embedding is usually called generic if it is also transverse to higher-order degeneracy strata in $\Grr^+(n,TX)$, see \cite{Lai}. The main result of this paper is the following cancellation theorem for CR singular points. 

\begin{theorem}\label{thm1}
Let $M$ be a closed oriented smooth $3$-manifold and let $f:M\hookrightarrow X$ be a generic embedding into a complex $3$-manifold. Let $L\subset M$ be the oriented link of CR singular points of $f$. Let $L'\subset L$ be a sublink. Suppose that $L'$ bounds an oriented embedded Seifert surface $S\subset M$ such that
\[S\cap (L\setminus L')=\emptyset.\] 
Let $U$ be a sufficiently small regular neighbourhood of $S$ with
\[U\cap L=L'\quad \text{and}\quad \partial U\cap L=\emptyset.\]
Then, for every $\varepsilon>0$ and every choice of metric $d$ on $X$, there exists an isotopy of embeddings
\[f_t:M\hookrightarrow X,\qquad t\in[0,1],\]
starting with $f_0=f$ and such that for all $t\in[0,1]$
\[f_t=f \text{ on } M\setminus U,\qquad \sup_{p\in M}d(f_t(p),f(p))<\varepsilon,\]
and the CR singular link of $f_1$ is precisely $L\setminus L'$.
\end{theorem}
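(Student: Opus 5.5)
Our plan is to use an h-principle for totally real embeddings, relative to the complement of $U$. Along the image of $f$, the Gauss map gives a section of the bundle $\Grr^+(3,TX)|_{f(M)}$. The CR singular link $L$ is the transverse preimage of the codimension-two stratum $\Sigma=\Grr^+(3,TX)\setminus\Grtr(3,TX)$. The fibre of $\Grtr$ is $\Gl_3(\C)/\Gl_3^+(\R)$, which deformation retracts to $\U(3)/\So(3)$. Its fundamental group is $\Z$, detected by $\det^2\colon \U(3)/\So(3)\to S^1$. So a totally real structure is obstructed only by a primary class, and the relevant quantity is the determinant section $\Lambda^3_\C$ described in the introduction.

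\textbf{Step 1: trivialise over $U$.} Since $U$ is a small regular neighbourhood of a surface with boundary, it is a handlebody. We trivialise $TX|_{f(U)}\cong U\times\C^3$ and $TM|_U\cong U\times\R^3$. In this trivialisation, the map $p\mapsto$ the complex span of the framing gives a map $\phi\colon U\to \C^{3\times 3}$. Its determinant $\delta\colon U\to\C$ vanishes transversally exactly on $L'$, with the orientation of the introduction. On $\partial U$ the map $\delta$ is nonzero.

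\textbf{Step 2: extend the totally real formal structure.} We want a homotopy of $\phi$, fixed near $\partial U$, through maps into $\C^{3\times3}$ whose end avoids $\{\det=0\}$. The obstruction to extending $\delta|_{\partial U}\colon \partial U\to\C^*$ over $U$ is its class in $H^1(\partial U)$ relative to $U$. By transversality this class is Poincaré dual to $L'$ paired with curves. The key point is that $L'=\partial S$ with $S\subset U$ oriented, so $L'$ is null-homologous in $U$ rel nothing. Hence $\delta|_{\partial U}$ extends to a nowhere-zero map $U\to\C^*$.

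The map $\Gl_3(\C)\to\C^*$ is a fibration with simply connected fibre $\Sl_3(\C)$. So we can lift to a map $\psi\colon U\to\Gl_3(\C)$ agreeing with $\phi$ near $\partial U$, since $U$ is $1$-dimensional up to homotopy. The straight-line homotopy from $\phi$ to $\psi$ in $\C^{3\times3}$ gives a homotopy of Gauss-type sections rel $\partial U$. This makes a formal totally real structure, namely a bundle monomorphism $TM|_U\to f^*TX$ that is totally real and equals $df$ near $\partial U$, homotopic to $df$ through monomorphisms rel boundary. We need to check that the linear homotopy stays injective over $\R$. If it does not, we perturb it, using that the real-rank-dropping locus has codimension $\ge 4$ in $\C^{3\times3}$ while $U\times[0,1]$ has dimension $4$, so a generic path can be chosen to avoid it.

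\textbf{Step 3: apply the Gromov h-principle.} We use the relative, $\cC^0$-dense h-principle for totally real embeddings (Gromov; see \cite{Gr,F0}). Totally real embeddings satisfy an open, $\mathrm{Diff}$-invariant relation, and the relative version holds for pairs $(U,\partial U)$. It produces an isotopy $f_t$, fixed outside $U$ and $\varepsilon$-close to $f$ in $\cC^0$, such that $f_1|_U$ is totally real. Outside $U$ nothing changes, so the CR singular link of $f_1$ is $L\setminus L'$. Genericity is preserved because near $L\setminus L'$ the map is untouched.

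\textbf{Main obstacle.} The delicate point is Step 2: verifying that the obstruction really vanishes relative to the boundary. This is where the Seifert surface hypothesis enters. The winding of $\delta$ along any loop $\gamma\subset\partial U$ equals the linking number $\mathrm{lk}(\gamma,L')$, computed in $U$ as the intersection $\gamma\cdot S=0$ when $\gamma\subset\partial U$ is pushed off $S$. We must also ensure that the trivialisations chosen in Step 1 do not introduce extra winding; we would handle this by taking the trivialisations to be constant near $\partial U$. The identification of the sign convention with the orientation of $L$ also needs checking.
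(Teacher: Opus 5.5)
The real difficulty is not the primary obstruction you single out in your last paragraph. It is the step at the end of Step 2, where you dispose of the rank-dropping locus by general position, and that step fails as written.

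The set $\Sigma\subset \mathrm{Hom}_\R(\R^3,\C^3)\cong\R^{18}$ of maps of real rank $\le 2$ has codimension exactly $(3-2)(6-2)=4$, and $U\times[0,1]$ is $4$-dimensional. So a generic homotopy from $\phi$ to $\psi$ fixed near $\partial U$ meets $\Sigma$ in finitely many isolated points rather than missing it; avoiding $\Sigma$ would need codimension at least $5$.

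These points cannot be removed in general. The space $\Mon_\R(\R^3,\C^3)\simeq V_{6,3}$ is $2$-connected with $\pi_3(V_{6,3})\cong\Z/2$. The parity of the intersection count is the difference class of $\phi$ and $\psi$ in $H^3(U,\partial U;\pi_3(V_{6,3}))\cong\Z/2$, and this class must vanish for a homotopy rel $\partial U$ through monomorphisms to exist. Your $\psi$ is an arbitrary lift, so nothing controls this class. Changing $\psi$ on a small ball by a map $S^3\to\Gl_3(\C)$ that is nontrivial in $\pi_3(V_{6,3})$ gives a totally real extension that is not formally homotopic to $df$ rel boundary. The relative h-principle in Step 3 needs exactly such a homotopy, so the argument does not close.

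The missing ingredient is the paper's secondary-obstruction computation $\pi_3\bigl(\Mon_\R(\R^3,\C^3),\Gl_3(\C)\bigr)=0$, i.e.\ surjectivity of $\pi_3(\U(3))\to\pi_3(V_{6,3})$:
\begin{itemize}
\item the inclusion $\U(3)\to\So(6)$ is an isomorphism on $\pi_3$, by the fibre sequence over $\So(6)/\U(3)\cong\C P^3$;
\item the projection $\So(6)\to\So(6)/\So(3)$ is onto on $\pi_3$, because $\pi_2(\So(3))=0$.
\end{itemize}
With this you may re-choose $\psi$ on a ball in the interior of $U$ to kill the difference class. After that your route goes through; apart from this step it parallels the paper, including identifying the primary obstruction with $\PD[L']$ via the determinant.

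Two smaller corrections:
\begin{itemize}
\item The relative lift $\psi$ exists because $\Sl_3(\C)$ is $2$-connected. The pair $(U,\partial U)$ has relative cells in dimensions $2$ and $3$, so ``$U$ is $1$-dimensional up to homotopy'' is not the relevant fact.
\item In your last paragraph, the winding of $\delta$ along an arbitrary loop in $\partial U$ need not vanish, and it does depend on the trivialisation. Only loops bounding in $U$ matter. That is exactly what the condition expresses that $[\delta|_{\partial U}]$ maps to $\PD[L']=0$ under the coboundary $H^1(\partial U;\Z)\to H^2(U,\partial U;\Z)$.
\end{itemize}
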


\begin{remark} 
The integrability of the complex structure $J$ is not used in the proof. Hence Theorem \ref{thm1} remains valid for 
$J$-complex tangent points of closed oriented $3$-manifolds generically embedded in an almost complex $6$-manifold $(X,J)$.
\end{remark}

For $X=\C^3$, the link $L$ of CR singular points of $f:M\hra \C^3$ is the zero set of the complex-valued function
\[\Delta_{df} = df(e_1)\wedge_{\mathbb C}df(e_2)\wedge_{\mathbb C}df(e_3),\]
where $(e_1,e_2,e_3)$ is a chosen positive global frame of $TM$. Then the preimage of a regular value of the normalization of $\Delta_f$ on $M \setminus L$ gives a Seifert surface for $L$. Hence $L$ is null-homologous in $M$ and Theorem \ref{thm1} implies that $f$ can be isotoped to a totally real embedding.

For a generic embedding $f:M\hra X$, the full CR singular link need not be null-homologous. With the orientation induced as the zero set of the determinant section, one has
\[[L]=\PD_M\left(f^*c_1(TX)\right)\in H_1(M;\Z).\]
Using Theorem \ref{thm1} one can cancel a sublink $L'\subset L$ which bounds an oriented embedded Seifert surface in the chosen neighbourhood $U$. In particular, $[L']=0$ in $H_1(M;\Z)$. Thus, with the induced orientations, 
\[[L\setminus L']=[L]\in H_1(M;\Z).\]
On the other hand, if
\[f^*c_1(TX)=0 \text{\ in\ } H^2(M;\Z),\]
then $[L]=0$ in $H_1(M;\Z)$. Hence the oriented link $L$ bounds an oriented embedded Seifert surface $S\subset M$. Applying Theorem \ref{thm1} with $L'=L$, we obtain the following result.

\begin{theorem}
Let $f:M\hra X$ be a generic embedding of a closed oriented $3$-manifold into a complex $3$-manifold $X$. Suppose that
\[f^*c_1(TX)=0\text{\ in\ } H^2(M;\Z).\]
Then $f$ can be isotoped to a totally real embedding by an arbitrarily $\cC^0$-small isotopy. In particular, the conclusion holds if $X=\C^3$ or if $M$ is an integral homology 3-sphere.  
\end{theorem}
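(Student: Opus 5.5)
The statement is Theorem 2, which follows directly from Theorem \ref{thm1} together with the homological identity $[L]=\PD_M(f^*c_1(TX))$ recorded above. The plan has three steps.

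\textbf{Step 1: the CR singular link is null-homologous.} This is the one step needing care, and the main obstacle is getting the sign and orientation conventions to match. I would argue as follows. The complex line bundle $\Lambda^3_\C f^*TX$ has first Chern class $f^*c_1(TX)$. Fix a global positive frame $(e_1,e_2,e_3)$ of $TM$, which exists since $M$ is parallelizable. Then the determinant section
\[\Delta=df(e_1)\wedge_\C df(e_2)\wedge_\C df(e_3)\]
is a section of this bundle. Genericity of $f$ makes $\Delta$ transverse to the zero section, and its zero set is exactly $L$, with the orientation fixed in the introduction. The standard fact that the transverse zero set of a section of a complex line bundle is Poincar\'e dual to its first Chern class then gives
\[[L]=\PD_M(f^*c_1(TX)).\]
Under the hypothesis $f^*c_1(TX)=0$, this yields $[L]=0$ in $H_1(M;\Z)$.

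To see that the homological vanishing can be realised geometrically, trivialise $\Lambda^3_\C f^*TX$, which is possible since its Chern class vanishes. Then $\Delta$ becomes a complex-valued function with $\Delta^{-1}(0)=L$. Take a regular value $\theta$ of $\Delta/|\Delta|\colon M\setminus L\to S^1$. Near $L$, transversality makes $\Delta$ look like a complex coordinate normal to $L$, so the preimage of $\theta$ closes up along $L$. Its closure is an oriented embedded surface $S$ with $\partial S=L$, oriented compatibly with the orientation of $L$. I would double-check that boundary orientation, since it is where the conventions must agree.

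\textbf{Step 2: apply the cancellation theorem.} Apply Theorem \ref{thm1} with $L'=L$. The condition $S\cap(L\setminus L')=\emptyset$ holds vacuously because $L\setminus L'=\emptyset$. Choose a small regular neighbourhood $U$ of $S$; then $U\cap L=L$, and $\partial U\cap L=\emptyset$ since $L=\partial S$ lies in the interior of $U$. Theorem \ref{thm1} then gives, for any $\varepsilon>0$, an $\varepsilon$-$\cC^0$-small isotopy $f_t$ ending at an embedding $f_1$ whose CR singular link is empty. By definition this means $f_1$ is totally real.

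\textbf{Step 3: the special cases.} If $X=\C^3$, then $TX$ is trivial, so $c_1(TX)=0$ and hence $f^*c_1(TX)=0$. If $M$ is an integral homology $3$-sphere, then $H^2(M;\Z)=0$, so $f^*c_1(TX)=0$ automatically. In both cases Steps 1 and 2 apply.
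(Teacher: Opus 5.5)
Your proposal is correct and is essentially the paper's argument: $[L]=\PD_M(f^*c_1(TX))=0$, so $L$ bounds an oriented embedded Seifert surface, and Theorem~\ref{thm1} with $L'=L$ gives a totally real embedding; the two special cases are handled exactly as you do. Your explicit Seifert surface, built from a trivialisation of $\Lambda^3_\C f^*TX$, is the same construction the paper sketches for $X=\C^3$, and the boundary-orientation check you flag is harmless: reversing the orientation of $S$ fixes any sign, and the proof of Theorem~\ref{thm1} only uses $[L]=0$ in $H_1(U;\Z)$.
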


\section{Proof of Theorem \ref{thm1}}
The proof of the theorem first deforms the differential $df$ on $U$, relative to the boundary of $U$, to a totally real formal differential, and then applies the relative h-principle for totally real embeddings.

 We first prove the following lemma. Recall that every oriented $3$-manifold is parallelizable.

\begin{lemma}\label{lemma}
Let $N$ be a compact oriented smooth $3$-manifold with boundary and let $E\to N$ be a complex vector bundle of rank $3$. Denote by 
\[\cM=\Mon_{\R}(TN,E)\] 
the bundle of $\R$-linear monomorphisms and let $\cT\subset \cM$ be the open subbundle consisting of totally real monomorphisms. Let
\[A\in \Gamma(N,\mathcal M)\]
be a section which takes values in $\cT$ over $\partial N$.  Choose a positive global frame $(e_1,e_2,e_3)$ of $TN$, and define
\[\Delta_A=A(e_1)\wedge_{\C} A(e_2)\wedge_{\C} A(e_3)
        \in \Gamma\left(N,\Lambda^3_{\C}E\right).\]
Assume that $\Delta_A$ is transverse to the zero section and let
\[Z_A=\Delta_A^{-1}(0),\]
with the orientation induced as the zero set of the section $\Delta_A$ of the complex line bundle $\Lambda^3_{\C}E$. 
Then $A$ is homotopic relative to $\partial N$, through sections of $\mathcal M$, to a section of $\mathcal T$ if and only if 
\[[Z_A]=0 \text{\ in\ } H_1(N;\Z).\]
Moreover, the homotopy may be chosen fixed on a sufficiently small collar of $\partial N$.
\end{lemma}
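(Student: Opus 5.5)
The plan is to prove Lemma~\ref{lemma} by obstruction theory for extending a lift of the section $A$ from $\mathcal M$ into the open subbundle $\mathcal T$, working relative to $\partial N$.

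\emph{Step 1: the fibres.} Trivialize $TN$ by $(e_1,e_2,e_3)$. The fibre $\mathcal M_x$ is then the space of real $3$-frames in $E_x\cong\R^6$, so $\mathcal M_x\simeq V_3(\R^6)$, which is $2$-connected. A monomorphism is totally real exactly when $(A(e_1),A(e_2),A(e_3))$ is a complex basis of $E_x$. Hence $\mathcal T_x\cong \Gl(3,\C)$ and, equivalently, a section of $\mathcal T$ over $Y\subset N$ is the same as an isomorphism $TN\otimes\C|_Y\cong E|_Y$. Since $\pi_1(\Gl(3,\C))=\Z$, detected by $\det\colon \Gl(3,\C)\to\C^*$, and $\pi_2(\Gl(3,\C))=0$, the long exact sequence of the pair gives
\[
\pi_1(\mathcal M_x,\mathcal T_x)=0,\qquad
\pi_2(\mathcal M_x,\mathcal T_x)\cong\pi_1(\mathcal T_x)\cong\Z,\qquad
\pi_3(\mathcal M_x,\mathcal T_x)\to\pi_2(\mathcal T_x)=0 .
\]
I will double-check these groups against the identification $\mathcal T_x\cong\Gl(3,\C)$ rather than a quotient by $\Gl(3,\R)$. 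Here the frame is fixed, so the fibre really is the group, and there is no $\mathbb Z/2$ coming from $\pi_2(\U/\mathrm O)$. The pair $(\mathcal M,\mathcal T)$ thus has only one relevant relative homotopy group, in degree $2$.

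\emph{Step 2: the primary obstruction.} Take a relative cell structure of $(N,\partial N)$, starting from a closed collar $C$ of $\partial N$ on which $A$ already lies in $\mathcal T$ by continuity. Deform $A$ rel $C$ skeleton by skeleton into $\mathcal T$.
\begin{itemize}
\item Over the $1$-cells there is no obstruction, since $\pi_1(\mathcal M_x,\mathcal T_x)=0$.
\item The obstruction over the $2$-cells is a class $o(A)\in H^2(N,\partial N;\Z)$. On a $2$-cell $D$ it records the winding of $\det$ around $\partial D$ after the $1$-skeleton has been pushed into $\mathcal T$. Equivalently, it is the relative Euler class of the complex line bundle $\Lambda^3_\C E$ with respect to the nonvanishing section $\Delta_A|_{\partial N}$.
\item Since $\Delta_A$ is transverse to the zero section, this relative Euler class is Poincar\'e--Lefschetz dual to the oriented zero set. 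Hence $o(A)=\PD[Z_A]$ under $H^2(N,\partial N;\Z)\cong H_1(N;\Z)$, with the orientation convention fixed in the statement.
\item The obstruction over the $3$-cells lies in $H^3(N,\partial N;\pi_3(\mathcal M_x,\mathcal T_x))$. It vanishes once we know that the boundary map to $\pi_2(\mathcal T_x)=0$ makes extension over $3$-cells automatic. More precisely, a map of $(D^3,S^2)\to(\mathcal M_x,\mathcal T_x)$ can be compressed into $\mathcal T_x$ rel $S^2$ if its class in $\pi_3(\mathcal M_x,\mathcal T_x)$ vanishes, and this relative group is only as large as $\operatorname{coker}(\pi_3\mathcal T_x\to\pi_3\mathcal M_x)$.
\end{itemize}

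\emph{Step 3: the main obstacle.} I expect the obstacle to be precisely this top-dimensional obstruction, and I would handle it through the bundle-isomorphism picture rather than cell by cell. Once $A$ has been moved into $\mathcal T$ over the $2$-skeleton (rel $C$), it gives an isomorphism $\phi\colon TN\otimes\C\to E$ over a neighbourhood of $N^{(2)}\cup C$. The complement is a disjoint union of $3$-balls $B$. On each $\partial B$, $\phi$ defines a map $S^2\to\Gl(3,\C)$, which is null-homotopic because $\pi_2(\Gl(3,\C))=0$. So $\phi$ extends as an isomorphism over $B$.

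To get the homotopy through $\mathcal M$, I would compare the extended $\phi$ with $A|_B$. Both are maps $B\to \mathcal M_x\simeq V_3(\R^6)$ that agree on $\partial B$, and their difference lies in $\pi_3(V_3(\R^6))$. I would try to absorb this difference by changing the chosen null-homotopy of $\partial B\to\Gl(3,\C)$ by an element of $\pi_3(\Gl(3,\C))=\Z$. For this, I need the surjectivity of $\pi_3(\U(3))\to\pi_3(V_3(\R^6))$ (which is $\Z$ or $\Z/2$), via $\U(3)\subset \So(6)\to V_3(\R^6)$. This is the concrete homotopy-theoretic check I would carry out first. If it fails, the fallback is to note that the homotopy is only required to lie in $\mathcal M$, not to be fixed, and to modify the filling inside the $2$-connected space $\mathcal M_x$. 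There, any two fillings with the same boundary are homotopic rel boundary once the $\pi_3$ difference is accounted for.

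\emph{Step 4: the converse and the collar.} The converse direction is immediate. If $A$ is homotopic rel $\partial N$ through $\mathcal M$ to a section of $\mathcal T$, then $o(A)$ is a homotopy invariant, so $\PD[Z_A]=o(A)=0$. Finally, since the whole construction was performed rel the collar $C$, the homotopy is fixed on a small collar of $\partial N$, as claimed.
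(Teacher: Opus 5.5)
Your framework matches the paper's: relative obstruction theory for $(\cM,\cT)$ with fibre pair $(V_{6,3},\Gl(3,\C))$, and the primary obstruction identified through $\det_{\C}$ with the relative first Chern class of $\Lambda^3_{\C}E$, hence with $\PD[Z_A]$. Steps~1 and~4, and Step~2 up to its last bullet, are fine.

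The gap is the top-dimensional obstruction, which you correctly isolate but never settle. Because $\pi_2(\Gl(3,\C))=0$, exactness gives only $\pi_3(\cM_x,\cT_x)\cong\operatorname{coker}\bigl(\pi_3(\Gl(3,\C))\to\pi_3(V_{6,3})\bigr)$. The vanishing of $\pi_2(\cT_x)$ lets you extend the isomorphism $\phi$ over each $3$-ball $B$. It says nothing about whether that extension is homotopic to $A|_B$ rel $\partial B$ inside $\cM$, so the last bullet of Step~2 is not an argument.

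Step~3 therefore hinges entirely on the surjectivity of $\pi_3(\U(3))\to\pi_3(V_{6,3})$, which you leave as a check to be carried out. Your fallback does not work. In fact $\pi_3(V_{6,3})\cong\Z/2\neq 0$. Two fillings $B\to\cM_x$ with the same boundary are homotopic rel $\partial B$ exactly when their difference class in $\pi_3(V_{6,3})$ vanishes, and $2$-connectedness of $\cM_x$ gives no control over that class. The phrase ``once the $\pi_3$ difference is accounted for'' is precisely the statement that needs proof.

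The missing input, which the paper supplies, has two parts:
\begin{itemize}
\item The forgetful map $\U(3)\to\So(6)$ is an isomorphism on $\pi_3$. This follows from the fibration $\U(3)\to\So(6)\to\So(6)/\U(3)\cong\C P^3$ and $\pi_3(\C P^3)=\pi_4(\C P^3)=0$.
\item The projection $\So(6)\to\So(6)/\So(3)\simeq V_{6,3}$ is surjective on $\pi_3$, since $\pi_2(\So(3))=0$.
\end{itemize}
The composite $u\mapsto(ue_1,ue_2,ue_3)$ is, up to homotopy, the inclusion $\cT_x\hookrightarrow\cM_x$. Hence $\pi_3(\cM_x,\cT_x)=0$, and your Step~3 goes through by altering the filling by a suitable element of $\pi_3(\Gl(3,\C))$.

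A smaller omission: before writing the obstruction in $H^2(N,\partial N;\Z)$, you should note that the coefficient system $\underline{\pi_2(\cM_x,\cT_x)}$ is trivial. One way, as in the paper: $\det_{\C}$ maps it isomorphically to $\pi_2(\Lambda^3_{\C}E_x,\Lambda^3_{\C}E_x\setminus\{0\})$, whose generator is preserved by positive changes of frame of $TN$ and complex changes of frame of $E$.
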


\begin{proof}
A real monomorphism \(B:T_pN\to E_p\) is totally real if and only if 
\[B(e_1)\wedge_{\C}B(e_2)\wedge_{\C}B(e_3)\ne 0.\] 
Thus the subbundle \(\cT\subset \cM\) is exactly the inverse image of the complement of the zero section under the determinant map. If a positive framing of $TN$ is changed by a map to $\Gl^+(3,\R)$, then $\Delta_A$ is multiplied by the positive real determinant of this change of frame. Hence the zero set $Z_A$, its induced orientation, and homology class $[Z_A]$ 
are independent of the chosen positive framing.

We apply obstruction theory to the bundle pair
\[(\cM,\cT)\longrightarrow N.\]
After choosing local positive frames of $TN$ and local complex frames of $E$, the fiber pair is identified with 
\[(V,W)=\left(\Mon_{\R}(\R^3,\C^3),\Gl(3,\C)\right).\]
The fiber $V$ deformation retracts to the real Stiefel manifold $V_{6,3}$, and $W$ deformation retracts to $\U(3)$. 
Since $V_{6,3}$ is $2$-connected, the relative homotopy exact sequence 
\begin{equation*}	
\begin{split}	
\cdots \longrightarrow
\pi_3(U(3))
\longrightarrow
\pi_3(V_{6,3})
\longrightarrow
\pi_3(V_{6,3},U(3))
\overset{\partial}{\longrightarrow}
\pi_2(U(3))
\longrightarrow
\pi_2(V_{6,3})\\
\longrightarrow
\pi_2(V_{6,3},U(3))
\overset{\partial}{\longrightarrow}
\pi_1(U(3))
\longrightarrow
\pi_1(V_{6,3})
\longrightarrow
\pi_1(V_{6,3},U(3))
\longrightarrow
0
\end{split}
\end{equation*}
gives
\[\pi_1(V,W)=0, \qquad \pi_2(V,W)\cong \pi_1(W)\cong \pi_1(\U(3))\cong \Z.\]
Under the connecting isomorphism $\pi_2(V,W)\cong \pi_1(W)$, the generator is detected by the determinant. Indeed,
\[\det\nolimits_{\C}:W=\Gl(3,\C)\longrightarrow \C^*\]
induces an isomorphism on $\pi_1$, since $\Sl (3,\C)\simeq \Su(3)$ is simply connected. Equivalently, the preferred generator of \(\pi_2(V,W)\) is represented by a small positively oriented normal disk to the codimension-two determinant locus $\{\det\nolimits_{\C}=0\}\subset V$; its boundary is a positive meridian.

There is no secondary obstruction in degree $3$. Indeed, the map 
\[\pi_3(W)\longrightarrow \pi_3(V)\]
is surjective. To see this, write $V\simeq \operatorname{SO}(6)/\So(3)$. The standard map $\U(3)\to \operatorname{SO}(6)$, obtained by forgetting the complex structure, is an isomorphism on $\pi_3$; for instance, this follows from the fibration
\[\U(3)\longrightarrow \operatorname{SO}(6)\longrightarrow \So(6)/U(3)
        \cong \C P^3\]
and the fact that $\pi_k(\mathbb{C}P^3)=0$ for $k=3,4$. The projection $\So(6)\to \So(6)/\So(3)$ is surjective on $\pi_3$, by the fibration
\[\So(3)\longrightarrow \So(6)\longrightarrow \So(6)/\So(3)\]
and $\pi_2(\operatorname{SO}(3))=0$. Hence
\[\pi_3(V,W)=0 .\]

We apply obstruction theory to address the question of when it is possible to deform $A$, relative to $\partial N$, through sections of $\cM$ to a section of $\cT$. Fixing a structure of a CW complex on $N$ it follows, using the triviality of $\pi_k(V,W)$ for $k=0, 1$, that $A$ is homotopic to a section that maps the 1-skeleton into $\cT$. Since for all $x\in N$, $\pi_2(\cM_x,\cT_x) \cong \pi_2(V,W) \cong \Z$, there is an obstruction class in 
\[H^2\left(N,\partial N;\underline{\pi_2(\cM_x,\cT_x)}\right)\]
whose vanishing is equivalent to existence of a section homotopic to $A$ that maps the 2-skeleton into $\cT$. If this obstruction vanishes, it follows from $\pi_3(V,W) = 0$ that $A$ can be homotoped, relative to $\partial N$, to a section of $\cT$.

Note that the local coefficient system $\underline{\pi_2(\cM_x,\cT_x)}$ is trivial. The determinant locus in $\cM_x$ is independent of the choice of trivializations of the bundles and hence defines a global codimension-two locus in $\cM$. Transporting the generator of $\pi_2(\cM_x,\cT_x)$, represented by a positively oriented disk normal to the determinant locus, around any loop will result in a positively oriented normal disk representing the same generator. This follows, since a change of positive frame in $TN$ and a change of complex frame in $E$ preserves the orientations. Alternatively, mapping the fibre pair by the determinant homomorphism $\det_{\C} : (V,W) \to (\C,\C^*)$ induces isomorphism on $\pi_2$. A change in frames multiplies the determinant by a nowhere-zero complex-valued function and hence preserves the generator of $\pi_2(\C,\C^*)$, showing that the local coefficient group $\underline{\pi_2\left(\Lambda^3_{\C}E_x,\Lambda^3_{\C}E_x\setminus\{0\}\right)}$ is trivial. It follows that the only obstruction to deforming $A$, relative to $\partial N$, through sections of $\cM$ into $\cT$ lies in
\[H^2(N,\partial N;\Z).\]

Next we identify this obstruction. By naturality of obstruction theory, the determinant map sends the primary obstruction for the pair $(\cM,\cT)$ to the primary obstruction for the pair
\[\left(\Lambda^3_{\C}E,\Lambda^3_{\C}E\setminus\{0\}\right).\]
Since the induced map
\[(\det\nolimits_\C)_*:\pi_2(V,W)\longrightarrow \pi_2(\mathbb C,\mathbb C^*)\]
is an isomorphism, these obstruction classes are identified. The latter obstruction is precisely the obstruction to deforming the determinant section $\Delta_A$, fixed on $\partial N$, to a nowhere-zero section of the complex line bundle $\Lambda^3_{\mathbb C}E$. Hence it is the relative first Chern class
\[c_1\left(\Lambda^3_{\mathbb C}E,\Delta_A|_{\partial N}\right)\in H^2(N,\partial N;\mathbb Z),\]
where the nonzero boundary section \(\Delta_A|_{\partial N}\) is used as the boundary trivialization. Since $\Delta_A$ is transverse to the zero section, the usual zero-locus representative of the relative Chern class gives
\[c_1\left(\Lambda^3_{\C}E,\Delta_A|_{\partial N}\right) =\PD_N[Z_A]\in H^2(N,\partial N;\Z),\]
with $Z_A$ oriented as the zero set of the section $\Delta_A$ of the complex line bundle $\Lambda^3_{\C}E$. Hence the obstruction vanishes if and only if
\[ [Z_A]=0.\]

Finally, since $\partial N$ is compact and $\cT$ is open, $A$ takes values in $\cT$ on a (smooth) collar of $\partial N$. 
Applying the same relative obstruction argument to the complement of this collar and keeping the new boundary fixed, and then extending the resulting homotopy by the constant homotopy on the collar, gives a homotopy fixed on a collar of $\partial N$.
\end{proof}

We now take a generic embedding $f:M\hra X$ and apply the preceding lemma with 
\[N=U,\qquad E=f^*TX|_U,\qquad A=df|_U.\]
Choose a positive global frame $(e_1,e_2,e_3)$ of $TU$ and define
\[\Delta=\Delta_{df}=df(e_1)\wedge_{\C}df(e_2)\wedge_{\C}df(e_3)
       \in \Gamma(U,\Lambda^3_{\C}f^*TX|_U).\]
By genericity, $\Delta$ is transverse to the zero section and
\[ \Delta^{-1}(0)=L'.\] 
Moreover, $\Delta$ is nonzero on $\partial U$, since $f$ is totally real there. The obstruction to homotoping the real differential $df|_U$, relative to $\partial U$, to a totally real section is the class
\[[\Delta^{-1}(0)] = [L'] \in H_1(U;\Z).\]
 Since $L'=\partial S$ in $U$, this class vanishes.
 Hence, by Lemma \ref{lemma}, there exists a homotopy of sections
\[A_t\in \Gamma(U,\cM),\qquad t\in[0,1],
\]
fixed near $\partial U$, such that
\[A_0=df|_U,\qquad A_1\in \Gamma(U,\mathcal T),\]
where now
\[\cM=\Mon_{\R}(TU,f^*TX|_U),\qquad \cT\subset \mathcal M
\]
is the subbundle of totally real monomorphisms.
 By the relative $h$-principle for totally real embeddings \cite[Section 2.4.5]{Gr}, \cite[Section 19.4]{EM}, this formal homotopy, fixed near $\partial U$, gives an arbitrarily $\cC^0$-small isotopy supported in $U$.
 After this isotopy, the embedding is totally real on $U$. Since $f$ is unchanged on $M\setminus U$, and since
\[
U\cap L=L',
\]
the CR singular link of the final embedding is precisely $L\setminus L'$. This proves the theorem.
\begin{remark}
It is worth noting that, in dimension $3$, the cancellation problem has no additional obstruction coming from the local type of the CR singularities, such as ellipticity, hyperbolicity, or parabolicity. Once the relative determinant obstruction vanishes, the obstruction theory for the pair $(V_{6,3},U(3))$ gives a homotopy of the differential to the totally real locus. The possible secondary obstruction vanishes because $\pi_3(V_{6,3},U(3))=0.$ We expect the situation to be quite different in higher dimensions: even when the analogous determinant  obstruction vanishes, further obstruction-theoretic difficulties may occur in deforming the full differential to a totally real formal differential. For example, when $n=4$, after the determinant obstruction has vanished, there may be an additional obstruction with coefficient group $\pi_4(V_{8,4},U(4))\cong \Z\oplus\Z$. 
\end{remark}

\bibliographystyle{amsplain}

\end{document}